\newtheorem{theo}{Theorem}[section]
\newtheorem{cor}[theo]{Corollary}
\newtheorem{lem}[theo]{Lemma}
\newtheorem{prop}[theo]{Proposition}
\newtheorem{defn}[theo]{Definition}
\theoremstyle{definition}
\def\a{{\a}}
\def\a{{\mathfrak a}}
\def\C{\mathbb C}
\def\CC{\mathcal C}
\def\H{\mathbb H}
\def\N{\mathbb N}
\def\P{\mathbb{P}}
\def\R{\mathbb R}
\def\Z{\mathbb{Z}}
\begin{document} 

\title[Interacting particle models and  the Pieri-type formulas]{Interacting particle models and  the Pieri-type formulas :\\ the symplectic case with non equal weights}

\author{Manon Defosseux}
\address{Laboratoire de Math\'ematiques Appliqu\'ees \`a Paris 5, Universit\'e Paris 5, 45 rue des  Saints P\`eres, 75270 Paris Cedex 06.}
\email{manon.defosseux@parisdescartes.fr}

\begin{abstract} We have introduced in \cite{DefosseuxSO} a particle model with  blocking and pushing interactions related to a Pieri type formula for the orthogonal group. A symplectic version of this model is presented here. It leads in particular to the particle model with a wall defined in \cite{WarrenWindridge}.    
\end{abstract}
\maketitle  
\section{introduction}
Let us recall that the Pieri's formula describes the product  of a Schur polynomial by a complete symmetric function.  It is a specific case of the Littlewood-Richardson rules for decomposing the tensor products  of  representations of the unitary group  into irreducible components.  In that specific case, irreducible components of the tensor products have a multiplicity equal to one.

 In \cite{DefosseuxSO} an interacting particle model has been introduced and proved  to be  related to a Pieri-type formula for the orthogonal group. Particles of the model can move to the left or to the right and are submitted to  blocking and pushing interactions. Moreover, they are constrained to stay  on the non-negative real axis. Such a model is said to be with a wall.  One can find for instance in \cite{BorodinKuan} and \cite{WarrenWindridge} other examples of models with a wall. In these last two references, models only differ by the behavior of the particles near the wall : in the first one, these particles are reflected by the wall, in the second one, they are blocked by the wall.  Actually the first one,  connected to models of   \cite{DefosseuxSO},  is strongly related to representations  of the orthogonal group whereas the second one involves  representations of the symplectic compact group. Pursuing the study of models with a wall  in the same way as in  \cite{DefosseuxSO}  we construct  here  a new interacting particle model with a wall depending on parameters. Two particular values of the parameters lead on the one hand to a model studied in \cite{WarrenWindridge} on the other hand to a random matrix model of \cite{Defosseux}. 
 
 A fundamental difference between models considered  here  and models usually considered in the literature is that they are related to tensor products of representations which aren't decomposed into irreducible components with multiplicity one.  Besides, the case with non equal weights  studied  here is of particular interest since it exposes connections between symplectic Schur functions and the random particle models. For these reasons, this paper could hopefully  provide a step towards a generalization.

The paper is organized as follows. In the second section, I shall recall the definitions of the symplectic Gelfand-Tsetlin patterns and Schur functions. In the third one I will describe the interacting particle model studied in the paper. In the fourth section I'll recall some properties about tensor product of particular representations  of the symplectic compact group, which naturally leads to some Markov kernels involved in the interacting particle model. These Markov kernels are defined in the fifth section. Section six is devoted to a random matrix model related to the particle model. Results of the paper are stated in section seven. I will  sketch the proofs in the last section.  

\section{Symplectic Gelfand-Tsetlin patterns, symplectic Schur functions}\label{definitions}
Here and elsewhere $\mathbb{N}$ stands for the set of nonnegative integers. For  $n\in \N^*$ and $x, y\in \mathbb{R}^{n}$ such that $x_n\le\dots\le x_1$ and $y_n\le\dots\le y_1$, we write $x \preceq y$ if $x$ and $y$ are interlaced, i.e.\@
$$x_{n}\le y_n\le x_{n-1} \le \cdots\le x_1 \le y_1 .$$
When $x\in \mathbb{R}^{n}$ and $y\in\mathbb{R}^{n+1}$ we add the relation $y_{n+1} \leq x_{n}$. We  denote by $\vert x\vert$ the sum of the coordinates $\sum_{i=1}^n x_i$. 
\begin{defn}  Let $k$ be a positive integer.
\begin{enumerate}  
\item We denote by $GT_{k}$  the set of symplectic Gelfand-Tsetlin patterns defined by  
\begin{eqnarray*}
GT_{k}=\{(x^{1},\cdots, x^{k}): x^{i}\in \mathbb{N}^{[\frac{i+1}{2}]} \mbox{ and }   x^{i-1}   \preceq    x^{i}  , 1\leq i\leq k\}.
\end{eqnarray*}
  
 \item If $x=(x^{1},\dots,x^{k})$ is a symplectic  Gelfand-Tsetlin pattern, $x^{i}$ is called the $i^{th}$ row of $x$ for $i\in\{1,\dots,k\}$.
  \item For $\lambda\in \N^{[\frac{k+1}{2}]}$  such that 
  $\lambda_{[\frac{k+1}{2}]}\le \dots \le \lambda_1,$ the subset of $GT_k$ of symplectic  Gelfand-Tsetlin patterns    having a $k^{th}$ row equal to $\lambda$ is denoted by $GT_k(\lambda)$.
 \end{enumerate}
\end{defn}  
 Usually, a symplectic  Gelfand-Tsetlin pattern is represented by a triangular array as indicated at figure \ref{coneC} for $k=2r$.  Actually the more right an entry is, the largest it is. Thus  the interlacing property satisfied by the coordinates of a symplectic  Gelfand-Tsetlin pattern  is graphically  represented. Note that the zero and the minus indicate that the coordinates are positive. For $x$ a symplectic  Gelfand-Tsetlin pattern of $GT_k$ and $(q_i)_{i\ge 1}$ a sequence of   positive real numbers  one defines $w^{k}_x(q_1,\dots,q_{[\frac{k+1}{2}]})$ by recursion by letting 
 $$   w^{1}_x(q_1)=q_1^{\vert x^1\vert},$$ and
 \begin{align*}
      w^{2i}_x(q_1,\dots,q_i)&=w_x^{2i-1}(q_1,\dots,q_i) \,q_i^{\vert x^{2i-1}\vert-\vert x^{2i}\vert} , \\
      w^{2i+1}_x(q_1,\dots,q_i,q_{i+1})&=w_x^{2i}(q_1,\dots,q_i)\,q_{i+1}^{\vert x^{2i+1}\vert-\vert x^{2i}\vert},
\end{align*}
 for $i\in\N^*.$
 \begin{defn}   For $\lambda\in \N^{[\frac{k+1}{2}]}$ such that 
 $\lambda_1\ge \dots\ge \lambda_{[\frac{k+1}{2}]},$  we denote by $s^k_\lambda$ the symplectic Schur function defined by
 $$s^k_\lambda(q)=\sum_{x\in GT_k(\lambda)}w_x^k(q),$$
 for $q=(q_1,\dots,q_{[\frac{k+1}{2}]})\in \R_+^{[\frac{k+1}{2}]}$.
 \end{defn} 
Notice that  the   cardinality of $GT_k(\lambda)$ is equal to $s_\lambda^k(\bf{1})$, with ${\bf 1}=(1,\dots,1)\in \R^{[\frac{k+1}{2}]}$.

\begin{figure}[h!]
\begin{pspicture}(2,3.2)(9,-0.3)

 \put(0.25,-0.5){$-x^{2r}_{1}$}  \put(2,-0.5){$\cdots$}  \put(4,-0.5){$-x^{2r}_{r}$} \put(5.35,-0.5){$0$}    \put(6.25,-0.5){$x^{2r}_{r}$}  \put(8.5,-0.5){$\cdots $}  \put(10.25,-0.5){$x^{2r}_{1}$}
 
 \put(1,0){$-x^{2r-1}_{1}$}  \put(2.75,0){$\cdots$}  \put(4.5,0){$-x^{2r-1}_{r}$}  \put(5.75,0){$x^{2r- 1}_{r}$}  \put(7.75,0){$\cdots $}  \put(9.5,0){$x^{2r-1}_{1}$}
 \put(4.25,0.5){$\cdots$}  \put(6,0.5){$\cdots$} 
   \put(2.25,1){$-x^{4}_{1}$}     \put(3.75,1){$-x^{4}_{2}$}     \put(5.35,1){$0$}   \put(6.5,1){$x^{4}_{2}$}   \put(8,1){$x^{4}_{1}$}  
     \put(3,1.5){$-x_{1}^{3}$}  \put(4.5,1.5){$-x_{2}^{3}$} \put(5.75,1.5){$x_{2}^{3}$} \put(7.125,1.5){$x_{1}^{3}$} 
       \put(3.75,2){$-x^{2}_{1}$}       \put(5.35,2){$0$}   \put(6.5,2){$x^{2}_{1}$}    
      \put(4.5,2.5){$-x^{1}_{1}$}       \put(5.75,2.5){$x^{1}_{1}$} 
    \put(5.35,3){$0$}    

\end{pspicture}
  \caption{A symplectic  Gelfand--Tsetlin pattern of   $GT_{2r}$}
  \label{coneC}
\end{figure} 
 
\section{Interacting particle models} 
In this section we  construct two processes   evolving on the set $GT_k$ of symplectic Gelfand-Tsetlin patterns. These processes can be viewed as   interacting particle models. For this, we associate to a symplectic Gelfand-Tsetlin pattern $x=(x^{1},\dots,x^{k})$, a configuration of particles on the integer lattice $\Z^2$ putting one particle labeled by $(i,j)$ at point $(x_j^{i},k-i)$ of $\Z^2$ for $i\in\{1,\dots,k\}$, $j\in\{1,\dots,[\frac{i+1}{2}]\}$. Several particles can be located at the same point. In the sequel we will say "particle $x_j^i$" instead of saying "particle labeled by $(i,j)$ located at point $(x_j^{i},k-i)$".
\subsection{Geometric jumps}
Let $q=(q_1,\dots,q_r)\in\R_+^r$ and  $\alpha\in (0,1)$ such that $\alpha q_i\in(0,1)$ and $\alpha q_i^{-1}\in(0,1)$ for $i=1,\dots, r$, with $r=[\frac{k+1}{2}]$. Consider two independent families  $$(\xi_j^i(n+\frac{1}{2}))_{i=1,\dots,k,j=1,\dots,[\frac{i+1}{2}];\, n\ge 0}, \quad \textrm{and } \quad (\xi_j^i(n))_{i=1,\dots k,j=1,\dots,[\frac{i+1}{2}];\, n\ge 1},$$   of independent geometric random variables  such that $$\P(\xi^{2i-1}_j(n+\frac{1}{2})=x)=\P(\xi_j^{2i}(n)=x)=(\alpha  q_i^{-1})^x(1-\alpha q_i^{-1}), \quad x\in \N,$$
and 
$$\P(\xi^{2i-1}_j(n)=x)=\P(\xi_j^{2i}(n+\frac{1}{2})=x)=(\alpha q_i)^x(1-\alpha q_i), \quad x\in \N.$$ 
 
The evolution of the particles is given by a process $(X(t))_{t\ge 0}$ on the set $GT_k$ of symplectic Gelfand-Tsetlin patterns. At each time $t\ge 0$, a particle labeled by $(i,j)$ is at point $(X_j^i(t),k-i)$ of $\Z^2$. Particles evolve as follows.  At time $0$ all particles are at zero, i.e. $X(0)=0$.    All particles try to jump to the left at times $n+\frac{1}{2}$ and to the right at times $n$, $n\in \N$.  Suppose that at time $n$, after all particles have jumped, there is one particle at point $(X_j^{i}(n),k-i)$ of $\Z^2$, for $i=1,\dots,k$, $j=1,\dots,[\frac{i+1}{2}]$.  Positions of particles are updated recursively as follows (see figure \ref{figuremodel}).
\\

\noindent \underline{At time $n+1/2$} : 
All particles  try to jump to the left one after another in the lexicographic order pushing the other  particles in order to stay in the set of symplectic  Gelfand-Tsetlin patterns and being blocked by the initial configuration $X(n)$ of the particles:
\begin{itemize}
\item Particle $X_1^1(n)$ tries to move to the left being blocked by $0$, i.e.
$$X_1^1(n+\frac{1}{2})=\max(X_1^1(n)-\xi_1^1(n+\frac{1}{2}),0).$$
\item Particle $X_1^{2}(n)$ tries to jump to the left. It is blocked by $  X_1^1(n)  $. If it is necessary it pushes $X_2^3(n)$ to an intermediate position denoted by $\tilde{X}_2^3(n)$, i.e.
\begin{align*}
X_1^{2}(n+\frac{1}{2})&=\max \big(  X_1^1(n)  ,X_1^2(n)-\xi_1^2(n+\frac{1}{2}) \big)\\
\tilde{X}_2^3(n)&=\min \big(X_2^3(n),X_1^2(n+\frac{1}{2}) \big)
\end{align*}
\item Particle $X_1^3(n)$ tries to move to the left being blocked by $X_1^2(n)$ :
\begin{align*} 
X_1^3(n+\frac{1}{2})=\max \big(X_1^2(n),X_1^3(n)-\xi_1^3(n+\frac{1}{2}) \big).
\end{align*}
 Particle $\tilde{X}_2^3(n)$ tries to move to the left being blocked by $0$, i.e  $$X_2^3(n+\frac{1}{2})=\max(\tilde{X}_2^3(n)-\xi_2^3(n+\frac{1}{2}),0).$$ \end{itemize}
Suppose now that rows 1 through $l-1$ have been updated for some $l>1$. Particles $X_{2}^l(n),\dots,X^l_{[\frac{l+1}{2}]}(n)$ of row $l$ are pushed to intermediate positions  
\begin{align*}
\tilde{X}_i^{l}(n)&=\min\big(X_i^{l}(n),X_{i-1}^{l-1}(n+\frac{1}{2})\big), \, i\in\{2,\dots,[\frac{l+1}{2}]\}.
\end{align*}
 Then particles $X_{1}^l(n),\tilde{X}_{2}^l(n),\dots, \tilde{X}^l_{[\frac{l+1}{2}]}(n)$  try to jump to the left  being blocked as follows by the initial position $X(n)$ of the particles. For $i=1,\dots,[\frac{l+1}{2}]$, 
\begin{align*}
X_i^{l}(n+\frac{1}{2})&=\max\big(X_i^{l-1}(n),\tilde{X}_i^{l}(n)-\xi_i^{l}(n+\frac{1}{2})\big),
\end{align*}
with the convention that $X^{l-1}_{\frac{l+1}{2}}(n)=0$ when $l$ is odd. 

\noindent \underline{At time $n+1$} : All particles try to jump to the right one after another in the lexicographic order pushing  particles in order to stay in the set of symplectic Gelfand-Tsetlin patterns and being blocked by the initial configuration $X(n+\frac{1}{2})$ of the particles.   The first three rows are updated as follows.
\begin{itemize}
\item Particle $X_1^1(n+\frac{1}{2})$ moves to the right pushing $X_1^2(n+\frac{1}{2})$ to an intermediate position $\tilde{X}_1^2(n+\frac{1}{2})$ : 
\begin{align*}  
X_1^1(n+1)&=  X_1^1(n+\frac{1}{2}) +\xi_{1}^1(n+1)  \\
\tilde{X}_1^2(n+\frac{1}{2})&=\max\big({X}_1^2(n+\frac{1}{2}),X_1^1(n+1)\big)
\end{align*}
\item Particle $\tilde{X}_1^2(n+\frac{1}{2})$  jumps to the right   pushing $X_1^3(n+\frac{1}{2})$ to an intermediate position $\tilde{X}_1^3(n+\frac{1}{2})$, i.e.
\begin{align*}
X_1^{2}(n+1)&= \tilde{X}_1^2(n+\frac{1}{2}) + \xi_1^2(n+1)\\
\tilde{X}_1^3(n+\frac{1}{2})&=\max \big(X_1^3(n+\frac{1}{2}),X_1^2(n+1) \big)
\end{align*}
\item Particle $X_2^3(n+\frac{1}{2})$  tries to move to the right being blocked by $X_1^2(n+\frac{1}{2})$. Particle $\tilde{X}_1^3(n+\frac{1}{2})$ moves to the right. That is
\begin{align*} 
X_2^3(n+1)&= \max(X_2^3(n+\frac{1}{2}) +\xi_{2}^3(n+1)) \big\vert,X_1^2(n+\frac{1}{2})) \\
X_1^3(n+1)&=\tilde{X}_1^3(n+\frac{1}{2}) +\xi_1^3(n+1)
\end{align*} 
\end{itemize}
Suppose rows 1 through $l-1$ have been updated for some $l>1$. Then particles of row $l$ are pushed to intermediate positions  
\begin{align*}
\tilde{X}_i^{l}(n+\frac{1}{2})&=\max\big(X_{i}^{l-1}(n+1),X_i^{l}(n+\frac{1}{2})\big), \, i\in\{1,\dots,[\frac{l+1}{2}]\},
\end{align*}
with the convention $X_{\frac{l+1}{2}}^{l-1}(n+1)=0$ when $l$ is odd. Then  particles $\tilde{X}_1^l(n+ \frac{1}{2}),\dots,\tilde{X}_{[\frac{l+1}{2}]}^l(n+ \frac{1}{2})$ try to jump to the right   being blocked by the initial position of the particles as follows. For $i=1,\dots,[\frac{l+1}{2}]$,
\begin{align*}
X_i^{l}(n+1)&=\min\big(X_{i-1}^{l-1}(n+\frac{1}{2}),\tilde{X}_i^{l}(n+\frac{1}{2})+\xi_i^{l}(n+1)\big).
\end{align*}  

\begin{figure}[h!]
\begin{pspicture}(-9.5,6)(10,-5)
\put(-5,5.5){\underline{Interactions between times n and $n+\frac{1}{2}$}}
\psline[linecolor=gray]{->}(-8,-1)(2.5,-1)
\psline[linecolor=gray]{->}(-8,1)(2.5,1) 
\psline[linecolor=gray]{->}(-8,3)(2.5,3)
\psline[linecolor=gray]{->}(-8,-3)(2.5,-3)
\psline[linecolor=gray]{->}(-7,-3.3)(-7,4.5)

\psline[linecolor=lightgray]{<-}(-4.9,3.3)(-4.9,4.3)
\psline[linecolor=lightgray]{<-}(-5.8,3.3)(-5.8,4.3)
\psline[linecolor=lightgray]{<-}(0.5,-0.8)(0.5,0.1)
\psline[linecolor=lightgray]{<-}(-2,1.2)(-2,2.3)
\psline[linecolor=lightgray]{<-}(-3,-0.7)(-3,0.1)
\psline[linecolor=lightgray]{<-}(-3.6,1.2)(-3.6,2.3)
\psline[linecolor=lightgray]{<-}(2,-2.7)(2,-2.1)
\psline[linecolor=lightgray]{->}(-2,-1.7)(-2,-1.2)
\psline[linecolor=lightgray]{->}(-3.6,-1.7)(-3.6,-1.2) 
\psline[linecolor=lightgray]{->}(-7,-1.7)(-7,-1.2) 
\psline[linecolor=lightgray]{->}(1,-3.7)(1,-3.1) 
\psline[linecolor=lightgray]{->}(0,-3.7)(0,-3.1) 
\psline[linecolor=lightgray]{->}(-2,-3.7)(-2,-3.1) 
\psline[linecolor=lightgray]{->}(-3,-3.7)(-3,-3.1)

\put(-6.5,4.5){\scriptsize{$X^{1}_1(n+\frac{1}{2})$}}
\put(-5,4.5){\scriptsize{$X^1_{1}(n)$}}
\put(1.7,-2){\scriptsize{$X^4_{1}(n)$}}
\put(-0.3,-4.1){\scriptsize{$X^4_{2}(n)$}}
\put(-2.3,2.6){\scriptsize{$X^2_{1}(n)$}}
\put(0.2,0.3){\scriptsize{$X^3_{1}(n)$}}
\put(-3.3,0.3){\scriptsize{$X^3_{2}(n)$}}
\put(-4.2,2.6){\scriptsize{$X^{2}_1(n+\frac{1}{2})$}}
\put(-2.3,-2){\scriptsize{$X^{3}_1(n+\frac{1}{2})$}}
\put(-4,-2){\scriptsize{$\tilde{X}_{2}^3(n)$}}
\put(-8,-2){\scriptsize{$X_{2}^3(n+\frac{1}{2})=0$}}
\put(0.7,-4.1){\scriptsize{$X_{1}^4(n+\frac{1}{2})$}}
\put(-2.3,-4.1){\scriptsize{$\tilde{X}_{2}^4(n)$}}
\put(-3.8,-4.1){\scriptsize{${X}_{2}^4(n+\frac{1}{2})$}}

\psarc{->}(-5.3,2.85){0.5}{34}{150}
\psarc{->}(-2.8,0.8){0.8}{23}{159}
\psarc{-}(-5.6,-2){2.2}{31}{129.5}
\psarc{-}(-1,-2){1.8}{37}{124}
\psarc{->}(-3.3,-0.9){0.3}{0}{180}
\psarc{->}(1.5,-3){0.5}{14}{168}
\psarc{->}(-1,-3.9){1.4}{48}{135}
\psarc{-}(-3,-3.9){1.4}{48}{90}

\psline{->}(-7,-0.29)(-7,-0.9) 
\psline{->}(-2,-0.5)(-2,-0.9) 
\psline{->}(-3,-2.5)(-3,-2.9) 

\psdots[dotstyle=o,dotsize=4pt](-3.6,1)(-2,1)
\psdots[dotstyle=square](0.5,-1)(-2,-1)
 \psdots(-4.9,3)(-5.8,3)
 \psdots[dotstyle=square*](-3.6,-1)(-3,-1)(-7,-1)
 \psdots[dotstyle=diamond](2,-3)(1,-3)
 \psdots[dotstyle=diamond*](-2,-3)(0,-3)(-3,-3)

\end{pspicture} 
\begin{pspicture}(-9.5,5.5)(10,-4)

\put(-5,5.5){\underline{Interactions between times $n+\frac{1}{2}$ and $n+1$}}
\psline[linecolor=gray]{->}(-8,-1)(2.5,-1)
\psline[linecolor=gray]{->}(-8,1)(2.5,1) 
\psline[linecolor=gray]{->}(-8,3)(2.5,3)
\psline[linecolor=gray]{->}(-8,-3)(2.5,-3)
\psline[linecolor=gray]{->}(-7,-3.3)(-7,4.5)

\psline[linecolor=lightgray]{<-}(-3.6,1.2)(-3.6,2.3)
\psline[linecolor=lightgray]{<-}(-1.35,1.2)(-1.35,2.3)
\psline[linecolor=lightgray]{<-}(0.1,-0.8)(0.1,0.1)
\psline[linecolor=lightgray]{<-}(-1.35,-0.8)(-1.35,0.1)
\psline[linecolor=lightgray]{->}(1,-3.7)(1,-3.1) 
\psline[linecolor=lightgray]{->}(-2,-3.7)(-2,-3.1) 
\psline[linecolor=lightgray]{->}(-3,-3.7)(-3,-3.1) 
\psline[linecolor=lightgray]{<-}(-5.8,3.3)(-5.8,4.3)
\psline[linecolor=lightgray]{<-}(-4.2,3.3)(-4.2,4.3)
\psline[linecolor=lightgray]{<-}(2.2,-2.7)(2.2,-2.1)
\psline[linecolor=lightgray]{->}(-2,-1.7)(-2,-1.2)
\psline[linecolor=lightgray]{->}(-4.7,-1.7)(-4.7,-1.2)

\put(-6.2,4.5){\scriptsize{$X^{1}_1(n+\frac{1}{2})$}}
\put(-4.7,4.5){\scriptsize{$X^{1}_1(n+1)$}}
\put(-1.9,2.6){\scriptsize{$X^2_{1}(n+1)$}}
\put(-5,-2){\scriptsize{$X_{2}^3(n+1)$}}
\put(-2.3,-4.1){\scriptsize{${X}_{2}^4(n+1)$}}
\put(-0.2,0.3){\scriptsize{$X^3_{1}(n+1)$}} 
\put(-1.6,0.3){\scriptsize{$\tilde{X}^3_{1}(n+\frac{1}{2})$}} 

\put(1.9,-2){\scriptsize{$X^4_{1}(n+1)$}}
\put(-4.2,2.6){\scriptsize{$X^{2}_1(n+\frac{1}{2})$}}
\put(-2.3,-2){\scriptsize{$X^{3}_1(n+\frac{1}{2})$}}
\put(0.7,-4.1){\scriptsize{$X_{1}^4(n+\frac{1}{2})$}}
\put(-3.8,-4.1){\scriptsize{${X}_{2}^4(n+\frac{1}{2})$}}
\put(-8,-2){\scriptsize{$X_{2}^3(n+\frac{1}{2})=0$}}

\psarc{<-}(-5,2.85){0.8}{18}{160}
\psarc{<-}(-2.5,0.6){1.2}{25}{153}
\psarc{<-}(-5.85,-1.3){1.2}{20}{157}
\psarc{<-}(-1.65,-1){0.35}{16}{159}
\psarc{<-}(-0.6,-1.1){0.7}{16}{163}
\psarc{-}(-2,-3.9){1.4}{90}{132} 
\psarc{<-}(1.6,-3.7){1}{55}{128} 
  
\psline{->}(-2,-2.48)(-2,-2.9) 

\psdots[dotstyle=o,dotsize=4pt](-3.6,1)(-1.35,1)
\psdots[dotstyle=square](-2,-1)(-1.35,-1)(0.1,-1)
 \psdots(-4.25,3)(-5.75,3)
 \psdots[dotstyle=square*](-7,-1)(-4.7,-1)
 \psdots[dotstyle=diamond](1,-3)(2.2,-3)
 \psdots[dotstyle=diamond*](-3,-3)(-2,-3)

\end{pspicture} 

\caption{An example of blocking and pushing interactions between times $n$ and $n+1$ for $k=4$. Different kinds of dots represent different particles.}
\label{figuremodel}
\end{figure}

\subsection{Exponential waiting times}\label{Poisson}
The interacting particle model described now has been introduced in \cite{WarrenWindridge}. In this model particles evolve on $\Z^2$ and  jump on their own volition by one  rightwards or  leftwards  after an exponentially distributed waiting time. The evolution of the particles is described by a random process $(Y(t))_{t\ge 0}$ on  $GT_{k}$. As in the previous model, at time $t\ge 0$ there is one particle labeled by $(i,j)$ at   point $(Y^{i}_j(t),k-i)$ of the integer lattice, for $i=1,\dots,k$, $j=1,\dots,[\frac{i+1}{2}]$. Particle labeled by $(2i,j)$ tries to jump to the left by one after an exponentially distributed waiting time with mean $q_i$ or to the right by one after an exponentially distributed waiting time with mean $q_i^{-1}$. Particle labeled by $(2i-1,j)$ tries to jump to the left by one after an exponentially distributed waiting time with mean $q_i^{-1}$ or to the right by one after an exponentially distributed waiting time with mean $q_i$. Waiting times are all independent. When a particle tries to jump, all particles are pushed and blocked according to the same rules as previously: particles above push and block particles below. That is if particle labeled by $(i,j)$ wants to jump to the right at time $t\ge 0$ then
\begin{enumerate}
\item if $i,j\ge 2$ and $Y_j^i(t^-)=Y_{j-1}^{i-1}(t^-)$ then particles don't move and $Y(t)=Y(t^-)$. 
\item else particles $(i,j),(i+1,j),\dots,(i+l,j)$ jump  to the right  by one for $l$  the largest integer such that $Y_j^{i+l}(t^-)=Y_j^i(t^-)$ i.e. $$Y_j^{i}(t)=Y_j^{i}(t^-)+1,\dots,Y_j^{i+l}(t)=Y_j^{i+l}(t^-)+1.$$ 
\end{enumerate}
If particle labeled by $(i,j)$ wants to jump to the left at time $t\ge 0$ then
\begin{enumerate}
\item if $i$ is odd, $j=(i+1)/2$ and $Y_j^{i}(t^-)=0$ then particle labeled by $(i,j)$ doesn't move.
\item if $i$ is odd, $j=(i+1)/2$ and $Y_j^{i}(t^-)\ge 1$ then $Y_j^i(t)=Y_j^i(t^-)-1$.
\item if $i$ is even or $j\ne (i+1)/2$, and $Y_j^i(t^-)=Y_j^{i-1}(t^-)$ then particles don't move.
\item if $i$ is even or $j\ne (i+1)/2$, and $Y_j^i(t^-)>Y_j^{i-1}(t^-)$ then particles $(i,j),(i+1,j+1),\dots,(i+l,j+l)$ jump  to the left  by one for $l$  the largest integer such that $Y_{j+l}^{i+l}(t^-)=Y_j^i(t^-).$ Thus $$Y_j^{i}(t)=Y_j^{i}(t^-)-1,\dots,Y_{j+l}^{i+l}(t)=Y_{j+l}^{i+l}(t^-)-1.$$ 
\end{enumerate}

Actually, process $(Y(t),t\ge 0)$ is obtained by letting $\alpha$ go to zero in the previous model. More precisely we get the following proposition. 
 \begin{prop}   \label{conv0} The process $(X([\alpha^{-1}t]),t\ge 0)$ converges in the sense of finite-dimensional distributions towards the process $(Y(t),t\ge 0)$ as $\alpha$ goes to zero.
\end{prop}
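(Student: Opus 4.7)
Both $(X(n))_{n\ge 0}$ and $(Y(t))_{t\ge 0}$ are time-homogeneous Markov chains on the countable state space $GT_k$, both started at $0$, so by the Markov property convergence of finite-dimensional distributions reduces to convergence of the rescaled one-step kernel of $X$ to the infinitesimal generator of $Y$. Writing $P_\alpha$ for the one-step transition kernel of $X$ and $L$ for the generator of $Y$, the plan is to establish that, for every bounded $f:GT_k\to\R$ and every $x\in GT_k$,
\[
\alpha^{-1}\bigl(P_\alpha f(x)-f(x)\bigr)\longrightarrow Lf(x)\quad\text{as }\alpha\to 0,
\]
and then conclude with a classical convergence theorem for Markov chains (Ethier--Kurtz, Thm.~1.6.5 or~8.7.1). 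Since the one-step kernel factors as $P_\alpha=L_\alpha R_\alpha$ with $L_\alpha-I$ and $R_\alpha-I$ both $O(\alpha)$, one has $\alpha^{-1}(P_\alpha-I)=\alpha^{-1}(L_\alpha-I)+\alpha^{-1}(R_\alpha-I)+O(\alpha)$, and the two half-steps can be handled separately.

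\textbf{One-step expansion.} The driving variables $\xi^l_i(\cdot)$ are geometric with parameter of order $\alpha$, so
\[
\P\bigl(\xi^l_i(\cdot)=0\bigr)=1-O(\alpha),\quad \P\bigl(\xi^l_i(\cdot)=1\bigr)=\alpha\,c_{l,i}+O(\alpha^2),\quad \P\bigl(\xi^l_i(\cdot)\ge 2\bigr)=O(\alpha^2),
\]
where $c_{l,i}$ is the appropriate $q_i^{\pm 1}$. Hence, within one half-step, with probability $1-O(\alpha)$ every $\xi$ vanishes and the configuration does not move; with probability $O(\alpha^2)$ either some $\xi\ge 2$ or at least two distinct $\xi$'s are nonzero simultaneously, which contributes only $O(\alpha)$ to $\alpha^{-1}(L_\alpha-I)$ and is negligible in the limit; and the remaining $O(\alpha)$ mass splits over the disjoint events ``exactly one $\xi^l_i$ equals $1$'', each of probability $\alpha\,c_{l,i}+O(\alpha^2)$.

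\textbf{Matching with Warren--Windridge and conclusion.} The key step is to check that when a single $\xi^l_i$ equals $1$ and all other $\xi$'s vanish, the sequential lex-order update of Section~3.1 produces exactly the configuration obtained from $x$ by applying the single-particle Warren--Windridge rule of Section~3.2 for particle $(l,i)$ in the corresponding direction. A short row-by-row induction yields this: since no other particle has yet moved, the ``blocking by the initial configuration $X(n)$'' convention of Section~3.1 coincides with the ``blocking by the current configuration'' convention of Section~3.2, and the cascading intermediate positions $\tilde X$ generate exactly the WW chain of displaced particles $(l,i),(l+1,i+1),\ldots$ (for a left attempt) and $(l,i),(l+1,i),\ldots$ (for a right attempt). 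Combining this with the one-step expansion gives
\[
\alpha^{-1}\bigl(P_\alpha f(x)-f(x)\bigr)=\sum_{(l,i),\,\epsilon}c^{l,i,\epsilon}\bigl[f(\Phi^{l,i,\epsilon}(x))-f(x)\bigr]+O(\alpha),
\]
where $\Phi^{l,i,\epsilon}$ is the WW update map and $c^{l,i,\epsilon}$ its rate, so the right-hand side converges pointwise to $Lf(x)$. The main obstacle is the combinatorial verification above: the deterministic lex-order update with blocking by the \emph{initial} configuration is not literally the WW rule, and one must carefully track that the two coincide in the single-jump regime before the generator computation becomes routine.
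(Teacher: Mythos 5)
The paper's own proof is a one-line citation: it observes that the statement follows from Lemma~8.9 of \cite{DefosseuxSO} (the orthogonal analogue) after a change of parameter. Your proof is a self-contained re-derivation, which is a genuinely different route: you reduce fdd convergence to convergence of the rescaled one-step kernel to the generator of $(Y(t))$, expand the geometric variables to first order in $\alpha$, and then verify combinatorially that in the ``exactly one nonzero $\xi$'' regime the lex-order, blocking-and-pushing update of Section~3.1 coincides with a single Warren--Windridge jump plus its push cascade. This is almost certainly the content of the cited Lemma~8.9 written out explicitly, so the two proofs agree in spirit even though the paper outsources the work. You have correctly identified the real heart of the matter (the single-jump coincidence of the two update rules), and your statements of the push chains --- $(l,i),(l+1,i+1),\dots$ for a left attempt via $\tilde X_i^l=\min(X_i^l,X_{i-1}^{l-1}(n+\tfrac12))$, and $(l,i),(l+1,i),\dots$ for a right attempt via $\tilde X_i^l=\max(X_i^{l-1}(n+1),X_i^l(n+\tfrac12))$ --- match the paper's conventions.

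Two points deserve tightening before this is a complete proof. First, the rate constants $c_{l,i}$ need to be pinned down and checked against Section~3.2: for the left half-step, row $2i-1$ has $\xi$-parameter $\alpha q_i^{-1}$ and row $2i$ has $\alpha q_i$, while for the right half-step these are swapped, and these must be matched against the exponential waiting-time means attached to each particle in the continuous model. Second, invoking an Ethier--Kurtz theorem on the countably infinite state space $GT_k$ requires either a core/uniformity argument or a more elementary bare-hands fdd argument exploiting that the chain changes state with probability $O(\alpha)$ per step, that two or more non-pushed jumps have probability $O(\alpha^2)$, and that a single transition is a bounded jump; as written, pointwise convergence of $\alpha^{-1}(P_\alpha f-f)$ for bounded $f$ is not by itself a citable hypothesis of those theorems. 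Neither of these is a gap in the idea, only in the execution.
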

\begin{proof} 
The proposition is obtained  by replacing $q$ by $\alpha$ in Lemma 8.9 of \cite{DefosseuxSO}.
\end{proof}

\section{A Pieri type formula for the symplectic group}

Let $r$ be a positive integer. One recalls some usual properties of  the finite dimensional representations of the compact symplectic  group $Sp_{2r}$  (see for instance \cite{Knapp} for more details).  The set of finite dimensional representations of $Sp_{2r}$ is indexed by the set  
$$\mathcal{W}_{2r}=\{\lambda\in \R^r:\lambda_r\in \N, \lambda_i-\lambda_{i+1}\in \N, i=1,\dots,r-1\}.$$ For $\lambda\in \mathcal{W}_{2r}$, using standard notations,  we denote by $V_\lambda$ the so called irreducible representation with highest weight $\lambda$ of $Sp_{2r}$.   

  Let $m$ be an integer and $\lambda$ an  element of  $\mathcal{W}_{2r}$. Consider the irreducible representations $V_\lambda$ and $V_{\gamma_m}$ of  $Sp_{2r}$, with $\gamma_m=(m,0,\cdots,0)$. The decomposition of the tensor product $V_\lambda\otimes V_{\gamma_m}$ into irreducible components is given by a Pieri-type formula for the symplectic group. It has been recalled in \cite{Defosseux}.  We have 
    \begin{align}\label{MultC} V_{\lambda}\otimes V_{\gamma_m}=\oplus_{\beta}M_{\lambda,\gamma_m}(\beta) V_{\beta},
  \end{align}
  where the direct sum is over all  $\beta\in \mathcal{W}_{2r}$ such that  
   there exists  $c\in \mathcal{W}_{2r}$ which satisfies $$\left\{
    \begin{array}{l}
      c\preceq \lambda , \quad c\preceq \beta \\
        \\
      \sum_{i=1}^{r}(\lambda_i-c_i+ \beta_i-c_i)=m.
    \end{array}
\right. $$  In addition, the  multiplicity $M_{\lambda,\gamma_m}(\beta)$ of the irreducible representation with highest weight $\beta $ is the   number of $c\in\mathcal{W}_{2r}$ satisfying these relations. Note  that $V_{\lambda}\otimes V_{\gamma_m}$ is not free  multiplicity if $m\notin \{0,1\}$.

\section{Markov kernels}
 Since for $\lambda\in \mathcal{W}_{2r}$ the Schur function $s^{2r}_\lambda$ is the character of the irreducible representation $V_\lambda$, decomposition (\ref{MultC}) implies  
\[
s_\lambda^{2r}(q)s^{2r}_{\gamma_m}(q)=\sum_{\beta\in \mathcal{W}_{2r}} M_{\lambda,\gamma_m}(\beta)s^{2r}_\beta(q).
\]
Thus one defines a family $(\mu_m)_{m\ge 0}$ of Markov kernels on $\mathcal{W}_{2r}$ by letting 
$$\mu_m(\lambda,\beta)=\frac{s^{2r}_\beta(q)}{s^{2r}_\lambda(q) s^{2r}_{\gamma_m}(q)} M_{\lambda,\gamma_m}(\beta),$$
for $\lambda,\beta\in \mathcal{W}_{2r}$ and $m\ge 0$.  Let $\xi_1,\dots ,\xi_{2r}$ be independent random variable such that  $\xi_1,\xi_2,\dots ,\xi_{r}$ are geometric random variables with  respective parameters $\alpha q_1,\dots,\alpha q_{r}$ and $\xi_r,\xi_{r+1},\dots ,\xi_{2r}$ are geometric random variables with  respective parameters  $\alpha q^{-1}_1,\dots,\alpha q^{-1}_{r}$.
Consider a random variable $T$ on $\N$  defined by
$$T=\sum_{i=1}^{2r}\xi_i.$$

\begin{lem} \label{nu} The law of $T$ is a measure $\nu$ on $\N$ defined by
$$\nu(m)=\alpha^m a(q)s^{2r}_{\gamma_m}(q),\quad m\in \N,$$
where
\[ 
a(q)=\prod_{i=1}^{r}(1-\alpha q_i)(1-\alpha q_i^{-1}).
\] 
\end{lem}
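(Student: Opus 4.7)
The plan is to compute the probability generating function of $T$ and identify it term by term with $\sum_m \nu(m)z^m$. Since $\xi_i$ is geometric with parameter $p_i$ in the sense that $\P(\xi_i=x)=p_i^x(1-p_i)$, one has $\E[z^{\xi_i}]=(1-p_i)/(1-p_iz)$, and the independence of the $\xi_i$ gives
\[
\E[z^T]=\prod_{i=1}^{r}\frac{1-\alpha q_i}{1-\alpha q_i z}\cdot\prod_{i=1}^{r}\frac{1-\alpha q_i^{-1}}{1-\alpha q_i^{-1}z}
=a(q)\prod_{i=1}^{r}\frac{1}{(1-\alpha q_iz)(1-\alpha q_i^{-1}z)}.
\]
It therefore suffices to establish the generating series identity
\[
\sum_{m\ge 0}s^{2r}_{\gamma_m}(q)\,u^{m}=\prod_{i=1}^{r}\frac{1}{(1-q_iu)(1-q_i^{-1}u)},
\]
and to substitute $u=\alpha z$; matching coefficients of $z^m$ on both sides will yield $\P(T=m)=\alpha^m a(q)s^{2r}_{\gamma_m}(q)$.

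To prove this series identity I would unfold the definition of $s^{2r}_{\gamma_m}$ using the description of $GT_{2r}(\gamma_m)$. Since $\gamma_m=(m,0,\dots,0)$, the interlacing constraints together with non-negativity force every coordinate of every row except the first to vanish: a pattern in $GT_{2r}(\gamma_m)$ reduces to a chain
\[
0=y_0\le y_1\le y_2\le\dots\le y_{2r-1}\le y_{2r}=m,
\]
where $y_i=x^{i}_{1}$. A short recursive computation from the definition of $w^k_x$ then gives
\[
w^{2r}_{x}(q)=\prod_{j=1}^{r}q_j^{\,2y_{2j-1}-y_{2j-2}-y_{2j}}.
\]
Setting $d_i=y_i-y_{i-1}\ge 0$, so $\sum_{i=1}^{2r}d_i=m$, the exponent $2y_{2j-1}-y_{2j-2}-y_{2j}$ simplifies to $d_{2j-1}-d_{2j}$, hence $w^{2r}_x(q)=\prod_{j=1}^{r}q_j^{d_{2j-1}}q_j^{-d_{2j}}$.

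The summation over $m$ and over the chains becomes an unconstrained sum over $(d_1,\dots,d_{2r})\in\N^{2r}$, and factorises into independent geometric series:
\[
\sum_{m\ge 0}s^{2r}_{\gamma_m}(q)\,u^{m}
=\sum_{d\in\N^{2r}}\prod_{j=1}^{r}(q_ju)^{d_{2j-1}}(q_j^{-1}u)^{d_{2j}}
=\prod_{j=1}^{r}\frac{1}{(1-q_ju)(1-q_j^{-1}u)},
\]
which is the required identity. No step is especially delicate; the only point that needs a little care is the reduction of the pattern set $GT_{2r}(\gamma_m)$ to a single chain, where one must use both the interlacing $x^{i-1}\preceq x^i$ and the non-negativity $x^i_j\ge 0$ imposed by the symplectic convention (the extra relation $y_{n+1}\le x_n$ between rows of consecutive lengths) to kill all but the leading coordinate of each row.
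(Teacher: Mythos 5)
Your proof is correct. The paper dismisses the lemma as ``straightforward computations'' without detail, and your argument — computing the probability generating function of $T$, then unwinding $GT_{2r}(\gamma_m)$ into a single chain $0=y_0\le y_1\le\cdots\le y_{2r}=m$ and substituting $d_i=y_i-y_{i-1}$ to factorise $\sum_m s^{2r}_{\gamma_m}(q)u^m$ into the product of geometric series — is exactly the natural way to supply those computations, and every step (the vanishing of all but the leading entry of each row, the formula $w^{2r}_x(q)=\prod_j q_j^{\,2y_{2j-1}-y_{2j-2}-y_{2j}}$, the identification $2y_{2j-1}-y_{2j-2}-y_{2j}=d_{2j-1}-d_{2j}$) checks out.
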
 
\begin{proof}
The lemma follows from straightforward computations.
\end{proof}
Lemma \ref{nu} implies in particular that the measure $\nu$ is a probability measure. Thus one defines a Markov kernel $P_{2r}$ on $\mathcal{W}_{2r}$ by letting 
\begin{align*} 
P_{2r}(\lambda,\beta)=\sum_{m=0}^{+\infty}\mu_m(\lambda,\beta)\nu(m),
\end{align*}
for $\lambda,\beta\in \mathcal{W}_{2r}$. 
 \begin{prop}  \label{explicitP2r} For $\lambda,\beta\in \mathcal{W}_{2r}$, 
 \begin{align}\label{P2r}
P_{2r}(\lambda,\beta)=\sum_{c\in \mathcal{W}_{2r}: c\preceq \lambda,\beta}a(q)\frac{s^{2r}_\beta(q)}{s^{2r}_\lambda(q)}\alpha^{\sum_{i=1}^{r}(\lambda_i+\beta_i-2c_i)}.
\end{align}
 \end{prop}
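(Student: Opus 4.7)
The proof is essentially a direct bookkeeping calculation from the definitions assembled earlier in the paper; no new technical input is required. The plan is to substitute the explicit formulas for $\mu_m$ and $\nu$ into the defining sum $P_{2r}(\lambda,\beta)=\sum_m\mu_m(\lambda,\beta)\nu(m)$ and then unfold the multiplicity $M_{\lambda,\gamma_m}(\beta)$ using the Pieri-type formula of Section 4.

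First I would write
\[
P_{2r}(\lambda,\beta)=\sum_{m=0}^{+\infty}\frac{s^{2r}_\beta(q)}{s^{2r}_\lambda(q)\,s^{2r}_{\gamma_m}(q)}\,M_{\lambda,\gamma_m}(\beta)\cdot\alpha^m\, a(q)\,s^{2r}_{\gamma_m}(q).
\]
The key simplification is that the symplectic Schur function $s^{2r}_{\gamma_m}(q)$ appearing in the denominator of $\mu_m$ cancels exactly against the one produced by Lemma~\ref{nu} for $\nu(m)$. Pulling the $m$-independent factor $a(q)s^{2r}_\beta(q)/s^{2r}_\lambda(q)$ out of the sum leaves
\[
P_{2r}(\lambda,\beta)=a(q)\,\frac{s^{2r}_\beta(q)}{s^{2r}_\lambda(q)}\sum_{m=0}^{+\infty}\alpha^m M_{\lambda,\gamma_m}(\beta).
\]

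Next I would invoke the Pieri-type formula (\ref{MultC}): $M_{\lambda,\gamma_m}(\beta)$ is the number of $c\in\mathcal{W}_{2r}$ with $c\preceq\lambda$, $c\preceq\beta$ and $\sum_{i=1}^r(\lambda_i-c_i+\beta_i-c_i)=m$. Writing $M_{\lambda,\gamma_m}(\beta)$ as a sum of indicator functions over such $c$ and exchanging the finite inner sum with the sum over $m$, the constraint $\sum_{i=1}^r(\lambda_i+\beta_i-2c_i)=m$ simply identifies the power of $\alpha$ to which each admissible $c$ contributes. Thus
\[
\sum_{m=0}^{+\infty}\alpha^m M_{\lambda,\gamma_m}(\beta)=\sum_{\substack{c\in\mathcal{W}_{2r}\\ c\preceq\lambda,\,c\preceq\beta}}\alpha^{\sum_{i=1}^r(\lambda_i+\beta_i-2c_i)},
\]
and substituting this into the previous display yields precisely (\ref{P2r}).

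I do not anticipate any genuine obstacle: the Pieri rule of Section~4 and Lemma~\ref{nu} have been set up specifically so that the cancellation between $s^{2r}_{\gamma_m}$ factors works, and the interchange of summations is justified since all terms are non-negative and the total sum equals $1$ (as $P_{2r}$ is a Markov kernel and $\nu$ is a probability measure by Lemma~\ref{nu}). The only point worth checking carefully is the convention on the interlacing $c\preceq\lambda,\beta$ inside $\mathcal{W}_{2r}$, but this is exactly the statement of the Pieri rule recalled from \cite{Defosseux}.
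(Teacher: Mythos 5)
Your proof is correct and follows precisely the route the paper intends: substitute the definitions of $\mu_m$ and $\nu$, cancel the $s^{2r}_{\gamma_m}(q)$ factors, and unfold $M_{\lambda,\gamma_m}(\beta)$ as a sum of indicators over admissible $c$ using the Pieri-type rule (\ref{MultC}). The paper simply states that the result ``follows immediately from the tensor product rules,'' and your argument is exactly that computation written out in full.
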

 \begin{proof} The proposition follows immediately from the tensor product rules  recalled  for the decomposition (\ref{MultC}).
 \end{proof}
 
We let $\mathcal{W}_{2r-1}=\mathcal{W}_{2r}$.  For $c_0,\lambda,c,\beta\in \mathcal{W}_k$, we let 
\begin{align}\label{S2r}
S_{k}((c_0,\lambda),(c,\beta))=a(q)\frac{s^{k}_\beta(q)}{s^{k}_\lambda(q)}\alpha^{\sum_{i=1}^{r}(\lambda_i+\beta_i-2c_i)}1_{c\preceq \lambda, \beta},
\end{align}
when $k=2r$ and 
\begin{align}\label{S2r-1}
S_{k}((c_0,\lambda),(c,\beta))=\tilde{a}(q)\frac{s^{k}_\beta(q)}{s^{k}_\lambda(q)}\alpha^{\sum_{i=1}^{r}(\lambda_i+\beta_i-2c_i)}((1-\alpha q_r^{-1})1_{c_r>0}+1_{c_r=0})1_{c\preceq \lambda,\beta},
\end{align}
when $k=2r-1$, with 
\[ 
\tilde{a}(q)=(1-\alpha q_r)\prod_{i=1}^{r-1}(1-\alpha q_i)(1-\alpha q_i^{-1}).
\] 
The main purpose of this paper is to show that $S_k$ describes the evolution of the $k^{\textrm{th}}$ row of the random symplectic Gelfand-Tsetlin patterns $(X(t),t\ge 0)$. Note that Proposition \ref{explicitP2r} ensures that $S_{2r}$ defines a Markov kernel on $\mathcal{W}_{2r}\times \mathcal{W}_{2r}$. There isn't such an argument for $S_{2r-1}$.  Anyway as $\Lambda_k$ and $Q_k$ defined in section \ref{proofs} are Markov kernels, Proposition \ref{intertwining} ensures that $S_{k}$ is a Markov kernel in both the odd and the even cases. Thus one defines also a Markov kernel $P_{2r-1}$ on $\mathcal{W}_{2r-1}$ by letting
 \begin{align}\label{P2r-1}
P_{2r-1}(\lambda,\beta)=\sum_{c\in  \mathcal{W}_{2r}: \,c\preceq \lambda,\beta}\tilde{a}(q)\frac{s^{2r-1}_\beta(q)}{s^{2r-1}_\lambda(q)}\alpha^{\sum_{i=1}^{r}(\lambda_i+\beta_i-2c_i)} ((1-\alpha q_r^{-1})1_{c_r>0}+1_{c_r=0}).
\end{align}
Actually $P_{2r-1}(\lambda,.)$ is the image of the measure $S_{2r-1}((c_0,\lambda),(.,.))$, for any arbitrary $c_0\in \mathcal W_{2r}$, by the map 
$$(x,y)\in \mathcal{W}_{2r}\times \mathcal{W}_{2r}\mapsto y\in \mathcal{W}_{2r}.$$

We'll  see that the image measure $P_k$ describes the evolution of the $k^{\textrm{th}}$ row of the random symplectic Gelfand-Tsetlin patterns observed at integer times. This Markov kernel is relevant for the understanding of the relation between the particle model and the random matrix model of the next section.
  \section{Random matrices}
We denote by $\H$ the set of quaternions. For us, a quaternion is just a $2 \times 2$ 
  matrix $Z$ with complex entries which can be written as 
  $$ Z=
\begin{pmatrix}
  a& b   \\
 -\bar b &  \bar a   
\end{pmatrix},
  $$
  where $a,b\in \C$. Its conjugate  $Z^*$  is the usual adjoint of the complex matrix $Z$. Let us denote by ${\mathcal M}_{r,m}$ the  real vector space of $r \times m$   matrices with entries in $\H$ and by ${\mathcal P}_{r}$   the set  of $r\times r$ Hermitian matrices with entries in $i\H$. Since a matrix in  ${\mathcal P}_{r}$ is a $2r\times 2r$ Hermitian complex matrix, it has real eigenvalues $\lambda_1 \geq \lambda_2 \geq \cdots \geq \lambda_{2r}$. Moreover $\lambda_{2r-i+1}= -\lambda_{i}$, for $i=1,\cdots,2r$.   We put on ${\mathcal M}_{r,m}$ the Euclidean structure defined by the scalar product, 
$$\langle M, N \rangle = \mbox{tr}(MN^*), \quad M,N\in {{\mathcal M}_{r,m}}.$$
Let $\mathcal{C}_r$ be the subset of $\R^r$  defined by
$$\mathcal{C}_r=\{x\in \R^r : x_1>\dots>x_r>0\}.$$

Theorem 4.5 of \cite{Defosseux} and Proposition 4.8 of \cite{Defosseux} imply the following proposition. 
\begin{prop}  \label{eigenvalues}  Let $r$ be a positive integer and $(M(n),n\ge 0)$, be a discrete process  on $\mathcal{P}_{r}$ defined by $$M(n)=\sum_{l=1}^{n}Y_l \begin{pmatrix} 1 &  0  \\
0 &  -1       
\end{pmatrix}Y_l^*,$$ where the $Y_l$'s are independent standard Gaussian variables in $\mathcal{M}_{r,1}$. For $n\in \N$, let  $\Lambda_1(n),\cdots,\Lambda_r(n)$ be the $r$ largest eigenvalues of $M(n)$ such that 
\[
\Lambda_1(n)\ge\cdots\ge\Lambda_r(n).
\] Then the process $(\Lambda(n),n\ge 0),$ is a Markov process with a transition densities $p_r$ defined by
\[ 
 p_r(x,y)=\frac{d_r(y)}{d_r(x)} I(x,y),\quad x,y\in \CC_r , 
 \]
 where 
 \[
 I(x,y)= 
          \int_{\mathbb{R}_+^{r}} 1_{\{x,y\succ z\}}e^{-\sum_{i=1}^{r}(x_i+y_i-2z_i)}\,dz,         
\]
and 
\[
d_r(x)=\prod_{1\le i<j\le r}(x_i^2-x_j^2)\prod_{i=1}^r x_i, \quad x,y\in \mathcal{C}_r.
\]
\end{prop}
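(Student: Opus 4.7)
The plan is to reduce the statement to two results already established in \cite{Defosseux} and to organize the reduction around the rank–two Gaussian update $Y_l\begin{pmatrix}1 & 0\\0 & -1\end{pmatrix}Y_l^*$.

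First I would argue the Markov property of $(\Lambda(n))_{n\ge 0}$. The law of $Y_l\begin{pmatrix}1 & 0\\0 & -1\end{pmatrix}Y_l^*$ is invariant under the conjugation action of the compact symplectic group $Sp_{2r}$ on $\mathcal{P}_r$, because $Y_l$ is standard Gaussian on $\mathcal{M}_{r,1}$ and this action preserves the Euclidean structure $\langle M,N\rangle=\operatorname{tr}(MN^*)$. Consequently the conditional law of $M(n)$ given $M(n-1)$ depends only on the $Sp_{2r}$-orbit of $M(n-1)$, hence only on $\Lambda(n-1)$, so $(\Lambda(n))_{n\ge 0}$ is Markov.

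Second, I would invoke Theorem 4.5 of \cite{Defosseux}, which writes the joint density of $(\Lambda(n-1),\Lambda(n))$ as an integral over an auxiliary interlacing variable $z\in\R_+^r$: because the update has rank at most two, classical minor-interlacing produces a Cauchy–Binet type identity in which the spectra of $M(n-1)$ and $M(n)$ both interlace a common $z$ in the symplectic sense $x,y\succ z$. The Gaussian weight $\exp(-\|Y_l\|^2)$ contributes a factor proportional to $\exp\bigl(-\sum_{i=1}^{r}(x_i+y_i-2z_i)\bigr)$ once the matrix integration is reduced to the interlacing coordinates.

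Third, I would combine this with Proposition 4.8 of \cite{Defosseux}, which gives the explicit Jacobian for passing from the matrix measure to the eigenvalue measure on $\CC_r$; this Jacobian is exactly $d_r$. Forming the ratio between the joint density at level $n$ and the marginal density at level $n-1$, one obtains a transition density of the form $\frac{d_r(y)}{d_r(x)}\int_{\R_+^r}\mathbf{1}_{\{x,y\succ z\}}e^{-\sum_{i=1}^{r}(x_i+y_i-2z_i)}\,dz$, which is $p_r(x,y)=\frac{d_r(y)}{d_r(x)}I(x,y)$.

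The main obstacle is purely bookkeeping: identifying correctly the interlacing pattern produced by a rank-two update on a quaternionic Hermitian matrix and matching the normalization constants of the Gaussian density against the Vandermonde-type factor $d_r$. Both tasks are precisely what Theorem 4.5 and Proposition 4.8 of \cite{Defosseux} are designed to accomplish, so once those two results are cited, the proof reduces to assembling them.
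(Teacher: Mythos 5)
Your proposal follows essentially the same route as the paper: the paper's entire proof of this proposition is the single sentence ``Theorem 4.5 of \cite{Defosseux} and Proposition 4.8 of \cite{Defosseux} imply the following proposition,'' and you cite exactly these two results and explain how they combine. Your expansion of what the citation buys you is reasonable: the conjugation-invariance argument for the Markov property of $(\Lambda(n))$ is the standard one (the Gaussian law of $Y_n$ is invariant under the quaternionic unitary group $U(r,\H)\cong Sp_{2r}$, and $(UY_n)\bigl(\begin{smallmatrix}1&0\\0&-1\end{smallmatrix}\bigr)(UY_n)^*=U\,Y_n\bigl(\begin{smallmatrix}1&0\\0&-1\end{smallmatrix}\bigr)Y_n^*\,U^*$, so the increment law depends on $M(n-1)$ only through its orbit, hence only through $\Lambda(n-1)$), and the two cited results respectively supply the interlacing integral $I(x,y)$ over the auxiliary spectrum $z$ and the radial Jacobian factor $d_r$. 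One caveat on presentation: describing the mechanism as a ``Cauchy--Binet type identity'' coming from the rank-two update is a plausible heuristic, but it is not necessarily the actual argument in \cite{Defosseux} (which works with orbit measures and their Fourier/Harish-Chandra transforms rather than minor expansions); since you are only outlining what the citation delivers, this does not affect correctness, but you should not present it as a verified description of the cited proofs.
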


  \section{results}
  We have  introduced a Markov process $(X(t),t\ge 0)$ on the set of symplectic Gelfand-Tsetlin patterns. We will show that if only  one row of the pattern is considered by itself, it found to be a Markov process too. Similar results have been proved in  \cite{WarrenWindridge}. The main specificity of our model is that coordinates of particles are not upgraded in the same way at every times. We are mainly interested in the process $(X^k(n),n\ge 0)$ but intermediate states $(X^k(n+\frac{1}{2}),n\ge 0)$  are considered for the proofs.  Actually these intermediate states come from the fact that the tensor product of (\ref{MultC}) is not free-multiplicity.  We let $Z^k(n)=(X^{k}(n-\frac{1}{2}),X^{k}(n))$, for $n\in \N$, with  $X^{k}(-\frac{1}{2})=0$. 
  \begin{theo}\label{theoprel} The process $(Z^k(n),n\ge 0)$  is a Markov process on $\mathcal{W}_{k}\times \mathcal{W}_{k} $ with transition kernel $S_k$.  
  \end{theo}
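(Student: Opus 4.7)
The plan is to apply the classical intertwining / Markov-function criterion (in the style of Diaconis--Fill and Rogers--Pitman), coupled with an induction on the row index $k$.

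First I would note that the update of any row $l$ depends only on rows $l-1$ and $l$, so the restriction $X^{1:k}:=(X^1,\dots,X^k)$ is itself a Markov process on $GT_k$; call its one-step transition kernel $Q_k$. The next step is to introduce the auxiliary kernel $\Lambda_k$ from $\{(c_0,\lambda)\in\mathcal W_k\times\mathcal W_k:c_0\preceq\lambda\}$ into the set of patterns in $GT_k$ (suitably augmented to track both the half-step and the full-step configurations) that encodes the conjectured conditional law of the full pattern given its bottom row at two consecutive half-integer and integer times. Concretely, the weight placed by $\Lambda_k$ on an admissible pattern should be proportional to the symplectic weight $w^k_x(q)/s^k_\lambda(q)$, conditioned to have $k$-th row $\lambda$ and bottom layer consistent with $c_0$.

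The heart of the proof is then the intertwining identity
\[
\Lambda_k\,Q_k \;=\; \widetilde{S}_k\,\Lambda_k ,
\]
where $\widetilde{S}_k$ is the natural lift of $S_k$ to $GT_k\times GT_k$. Equivalently, if $X^{1:k}(n)$ has the Gibbs law $\Lambda_k((c_0,\lambda),\cdot)$, then one step later the pair $(X^{1:k}(n+\tfrac12),X^{1:k}(n+1))$ again has the Gibbs form, driven by a bottom-row transition sampled from $S_k$. The normalisation $a(q)$ or $\tilde a(q)$, the Schur ratio $s^k_\beta(q)/s^k_\lambda(q)$ and the geometric factor $\alpha^{\sum_i(\lambda_i+\beta_i-2c_i)}$ appearing in (\ref{S2r}) and (\ref{S2r-1}) then arise directly from the Pieri rule (\ref{MultC}) together with Lemma~\ref{nu} applied to the increments of each row. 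Once the intertwining is in place, the Markov-function theorem yields that $(Z^k(n))_{n\ge 0}$ is Markov with transition kernel $S_k$.

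The induction on $k$ is then straightforward: the base case $k=1$ is a direct check that the reflected geometric random walk $X^1$ produces exactly the two-step transition described by $S_1$, while the inductive step uses the $(k{-}1)$-row statement to get the Gibbs conditional law on rows $1,\dots,k-1$ and integrates the update rule for row $k$ (pushing/blocking by row $k-1$ both at the half-step and at the full step) against this law. The main obstacle will be the asymmetric wall behaviour in the odd case $k=2r-1$: the indicator $(1-\alpha q_r^{-1})\mathbf{1}_{c_r>0}+\mathbf{1}_{c_r=0}$ in (\ref{S2r-1}) must be recovered through a careful case analysis, separating configurations in which the bottom particle of the odd row reaches $0$ at the half-step from those in which it does not, and verifying that the resulting normalisation $\tilde a(q)$ replaces $a(q)$ exactly as predicted. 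Everything else follows combinatorially from the recursive definition of $w^k_x(q)$ and the interlacing structure.
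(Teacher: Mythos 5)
Your outline matches the paper's proof in all the essentials: induction on $k$, an intertwining kernel built from the symplectic-weight Gibbs law, the Rogers--Pitman criterion, and the observation that the odd-$k$ wall factor $(1-\alpha q_r^{-1})1_{c_r>0}+1_{c_r=0}$ is the delicate point. The one discrepancy worth fixing is in how you set up the intertwining. You posit $\Lambda_k$ as a kernel from $\mathcal W_k\times\mathcal W_k$ into an augmented copy of the full pattern space $GT_k$ and write $\Lambda_kQ_k=\widetilde{S}_k\Lambda_k$ with a ``lift'' $\widetilde S_k$; this is a single global Borodin--Ferrari-style intertwining, which, if proved directly, would render the row-by-row induction superfluous. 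The paper instead keeps the induction load-bearing: it uses the inductive hypothesis together with the fact that $S_{k-1}((c_0,\lambda),\cdot)$ does not depend on $c_0$ to pass to the smaller two-layer chain $\big(X^{k-1}(n),Z^k(n)\big)$ on $\mathcal W_{k-1}\times\mathcal W_k\times\mathcal W_k$, whose kernel is the paper's $Q_k$; the intertwining kernel is $\Lambda_k\big((c,\lambda),(\beta,c',\lambda')\big)=m_\lambda(\beta)\,1_{c\preceq\lambda}\,1_{c=c',\lambda=\lambda'}$ on that three-component space, with $S_k$ itself (no lift) on the small side. The identity $\Lambda_kQ_k=S_k\Lambda_k$ is then verified by explicit summation, first over $u$ and then over $v$, using identities (4) and (5) of Lemma 8.3 of \cite{DefosseuxSO}; your ``careful case analysis'' is the right instinct, but you would need those summation identities, or an equivalent computation, to actually close the intertwining.
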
 
   If $P_k$ is the Markov kernel defined in (\ref{P2r}) and (\ref{P2r-1}) then Theorem \ref{theoprel} implies immediately  the following theorem which is our main result. 
\begin{theo}\label{maintheo}
The process $(X^{k}(n))_{n\ge 0}$ is a Markov process on $\mathcal{W}_{k}$ with transition kernel $P_k$.  
\end{theo}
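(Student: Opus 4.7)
The plan is to deduce Theorem \ref{maintheo} from Theorem \ref{theoprel} by a standard projection argument, exploiting the key structural feature that the kernel $S_{k}((c_{0},\lambda),(c,\beta))$ does not depend on the first coordinate $c_{0}$. Since $X^{k}(n)$ is the second component of $Z^{k}(n)=(X^{k}(n-\tfrac12),X^{k}(n))$, the process $(X^{k}(n))_{n\ge 0}$ is the image of the Markov process $(Z^{k}(n))_{n\ge 0}$ under a deterministic projection. In general a deterministic function of a Markov chain need not remain Markov, so the proof really amounts to checking that for our particular $S_{k}$ this loss of memory does not occur.

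First, inspecting (\ref{S2r}) and (\ref{S2r-1}) one observes that in both the even and the odd case the quantity $S_{k}((c_{0},\lambda),(c,\beta))$ depends on $c_{0}$ only through $\lambda$. Summing this kernel over the first component of the target state therefore gives, for any fixed $c_{0}$,
\[
\sum_{c\in\mathcal{W}_{k}} S_{k}((c_{0},\lambda),(c,\beta)) \;=\; P_{k}(\lambda,\beta),
\]
by comparison with (\ref{P2r}) and (\ref{P2r-1}). This is exactly the statement, already noted below (\ref{P2r-1}), that $P_{k}(\lambda,\cdot)$ is the pushforward of $S_{k}((c_{0},\lambda),(\cdot,\cdot))$ under the projection $(x,y)\mapsto y$.

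Next, using Theorem \ref{theoprel}, I would condition on the natural filtration $\mathcal{F}_{n}=\sigma(Z^{k}(0),\dots,Z^{k}(n))$. For any bounded $f:\mathcal{W}_{k}\to\R$,
\[
\E\bigl[f(X^{k}(n+1))\,\big|\,\mathcal{F}_{n}\bigr]
=\sum_{c,\beta} f(\beta)\,S_{k}(Z^{k}(n),(c,\beta))
=\sum_{\beta} f(\beta)\,P_{k}(X^{k}(n),\beta),
\]
where the second equality uses the independence of $S_{k}$ on $c_{0}$ together with the summation identity above. The right-hand side is a function of $X^{k}(n)$ alone. Since $\sigma(X^{k}(0),\dots,X^{k}(n))\subseteq \mathcal{F}_{n}$, conditioning once more on the smaller $\sigma$-algebra preserves the equality, yielding
\[
\E\bigl[f(X^{k}(n+1))\,\big|\,X^{k}(0),\dots,X^{k}(n)\bigr]
=\sum_{\beta} f(\beta)\,P_{k}(X^{k}(n),\beta),
\]
which is the Markov property for $(X^{k}(n))_{n\ge 0}$ with transition kernel $P_{k}$.

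The only conceptual point to verify is the independence of $S_{k}$ on the first coordinate of its source state; once that is observed, everything else is bookkeeping. The main obstacle, then, is really upstream, in the proof of Theorem \ref{theoprel} itself, which is where the intertwining with $\Lambda_{k}$ and $Q_{k}$ and the non-multiplicity-free tensor rules (\ref{MultC}) are handled; for Theorem \ref{maintheo} proper, no further obstacle remains.
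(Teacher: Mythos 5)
Your proof is correct and follows exactly the route the paper intends: the paper explicitly notes that $P_k(\lambda,\cdot)$ is the pushforward of $S_k((c_0,\lambda),(\cdot,\cdot))$ under $(x,y)\mapsto y$ and that Theorem \ref{theoprel} "implies immediately" the result, and your write-up simply spells out the tower-property argument that makes that implication rigorous, using precisely the key observation that $S_k$ is independent of the first coordinate $c_0$ of the source state. This is the same argument, just with the bookkeeping made explicit.
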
 
Convergence stated in Proposition \ref{conv0} and Theorem \ref{maintheo}   lead to the following corollary, which is exactly Theorem 2.3 of \cite{WarrenWindridge}.  Let us denote $e_1,\dots,e_{[\frac{k+1}{2}]}$ the standard basis of $\R^{[\frac{k+1}{2}]}$.
\begin{cor} 
The process $(Y^k(t),t\ge 0)$ is a Markov process with infinitesimal generator defined by
$$A(x,y)=\frac{s_{y}^k(q)}{s_x^k(q)}1_{y\in \mathcal{W}_k},$$ 
for $x\in \mathcal{W}_k, y\in \{x+e_1,\dots,x+e_{[\frac{k+1}{2}]},x-e_1,\dots,x-e_{[\frac{k+1}{2}]}\}.$
\end{cor}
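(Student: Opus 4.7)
The plan is to deduce the corollary from Proposition \ref{conv0} and Theorem \ref{maintheo} by identifying the infinitesimal generator of the limiting process. Theorem \ref{maintheo} supplies that, for each small $\alpha>0$, $(X^{k}(n))_{n\ge 0}$ is a Markov chain on $\mathcal W_k$ with transition kernel $P_k$; write $P_k^{\alpha}$ to stress the dependence on $\alpha$. Proposition \ref{conv0} then says that $(X^{k}([\alpha^{-1}t]))_{t\ge 0}$ converges in finite-dimensional distributions to $(Y^{k}(t))_{t\ge 0}$ as $\alpha\to 0$. By the standard approximation result for time-rescaled Markov chains, it is enough to show that the pointwise limit
$$A(\lambda,\beta)\;=\;\lim_{\alpha\to 0}\frac{P_k^{\alpha}(\lambda,\beta)-\delta_{\lambda,\beta}}{\alpha}$$
exists and coincides with the generator asserted in the corollary; this identifies $Y^k$ as the pure-jump Markov process on $\mathcal W_k$ with that generator.

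To carry out the computation I inspect (\ref{P2r}) and (\ref{P2r-1}). Each summand indexed by $c$ carries the weight $\alpha^{\sum_i(\lambda_i+\beta_i-2c_i)}$, and the constraints $c\preceq\lambda$ and $c\preceq\beta$ force $c_i\le\min(\lambda_i,\beta_i)$, so the exponent is bounded below by $\sum_i|\lambda_i-\beta_i|$. Therefore only $\beta$ with $\sum_i|\lambda_i-\beta_i|\le 1$ can contribute at order $\alpha^0$ or $\alpha^1$, i.e.\ $\beta\in\{\lambda\}\cup\bigl(\{\lambda\pm e_j:1\le j\le[\frac{k+1}{2}]\}\cap\mathcal W_k\bigr)$. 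For $\beta=\lambda+e_j\in\mathcal W_k$ the unique minimizer (of exponent $1$) is $c=\lambda$, and for $\beta=\lambda-e_j\in\mathcal W_k$ it is $c=\beta$; in both situations the prefactor $a(q)$, or $\tilde a(q)\bigl((1-\alpha q_r^{-1})1_{c_r>0}+1_{c_r=0}\bigr)$ in the odd case, tends to $1$ as $\alpha\to 0$. One obtains $P_k^{\alpha}(\lambda,\beta)=\alpha\, s_\beta^{k}(q)/s_\lambda^{k}(q)+O(\alpha^2)$ on every admissible neighbor, and $P_k^{\alpha}(\lambda,\beta)=O(\alpha^2)$ for every other $\beta\ne\lambda$. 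Dividing by $\alpha$ and passing to the limit yields $A(\lambda,\beta)=s_\beta^{k}(q)/s_\lambda^{k}(q)$ on the admissible neighbors and $0$ elsewhere off the diagonal; the diagonal rate $A(\lambda,\lambda)=-\sum_{\beta\ne\lambda}A(\lambda,\beta)$ is then enforced by the row-sum identity $\sum_\beta P_k^{\alpha}(\lambda,\beta)=1$.

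The principal point requiring care is the odd case $k=2r-1$, where the factor $(1-\alpha q_r^{-1})1_{c_r>0}+1_{c_r=0}$ must be tracked. For each admissible minimizer $c$ entering the analysis, $c_r\in\{\lambda_r,\lambda_r-1\}$, so this factor is $1+O(\alpha)$ and does not affect the leading $\alpha$-coefficient. In the boundary case $\lambda_r=0$ the neighbor $\lambda-e_r$ falls outside $\mathcal W_k$ and is automatically excluded from the sum of off-diagonal rates; the $1_{c_r=0}$ branch of the factor then feeds the appropriate correction into the expansion of $\tilde a(q)$ so that the row-sum identity still produces the correct diagonal value of $A$. Once this bookkeeping is done the generator coincides with the formula announced in the statement.
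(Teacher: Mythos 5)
Your proof is correct and follows essentially the same route as the paper: appeal to Theorem~\ref{maintheo} for the transition kernel $P_k$ at each $\alpha$, identify the generator of the limiting process as $\alpha\to 0$ of the $\alpha^{-1}$-rescaled chain, and then invoke Proposition~\ref{conv0} to see that this limit is $(Y^k(t))$. Where the paper simply cites Lemma 2.21 of \cite{BorodinFerrari} for the step ``discrete chain with $P_k^\alpha = I + \alpha A + O(\alpha^2)$, time-rescaled by $\alpha^{-1}$, converges to the continuous-time chain with generator $A$,'' you carry out the expansion of $P_k$ explicitly, which is useful detail the paper leaves implicit; the unique-minimizer observation ($c=\lambda$ when $\beta=\lambda+e_j$, $c=\beta$ when $\beta=\lambda-e_j$, both admissible precisely because $\beta\in\mathcal W_k$) is exactly what makes the coefficient of $\alpha$ a single clean ratio of Schur functions. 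The one thing you should still say explicitly rather than gesture at as ``the standard approximation result'' is that this convergence of rescaled chains to a jump process requires such a lemma (the paper names it as Lemma 2.21 of \cite{BorodinFerrari}); having the expansion $P_k^\alpha=I+\alpha A+O(\alpha^2)$ with $A$ a bona fide bounded-jump generator is the hypothesis, and the lemma supplies the conclusion.
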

\begin{proof}
Theorem \ref{maintheo} and Lemma 2.21 of \cite{BorodinFerrari} implies that the process $$(X^k([\alpha^{-1}t]),t\ge 0)$$ converges towards a Markov process with infinitesimal  generator equal to $A$ as $\alpha$ goes to zero. The convergence stated in  Proposition \ref{conv0}  achieves the proof.
\end{proof}
If $(\Lambda(n),n\ge 0)$ is the process of eigenvalues considered in Proposition \ref{eigenvalues} then the following corollary holds.
\begin{cor} \label{coroMat} Letting  $q_i=1$ for $i=1,\dots,r$, the process $((1-\alpha)X^{2r}(n), n\ge 1)$ converges in distribution towards the process of eigenvalues $(\Lambda(n), n\ge 1)$ as $\alpha$ goes to one.  
\end{cor}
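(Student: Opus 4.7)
The plan is to exploit Theorem~\ref{maintheo}, which identifies $(X^{2r}(n))_{n \ge 0}$ as a Markov chain on $\mathcal{W}_{2r}$ with the explicit transition kernel (\ref{P2r}). Specialising $q_i = 1$ for all $i$ yields $a(q) = (1-\alpha)^{2r}$ and $s^{2r}_\mu({\bf 1}) = \dim V_\mu$. I would then show that under the scaling $x = (1-\alpha)\lambda$, $y = (1-\alpha)\beta$ the rescaled kernel $(1-\alpha)^{-r} P_{2r}(\lambda, \beta)$ converges as $\alpha \to 1$ to the transition density $p_r(x, y)$ of Proposition~\ref{eigenvalues}. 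Combined with the matching initial condition $(1-\alpha)X^{2r}(0) = 0 = \Lambda(0)$, this delivers convergence of all finite-dimensional distributions of $((1-\alpha)X^{2r}(n))_{n\ge 1}$ to those of $(\Lambda(n))_{n\ge 1}$.

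Two parallel asymptotics underlie the kernel convergence. In the sum over $c$ in (\ref{P2r}) I would change variables to $z = (1-\alpha)c$. Since $\log\alpha \sim -(1-\alpha)$ as $\alpha \to 1$, the exponential factor satisfies
\[
\alpha^{\sum_{i=1}^{r}(\lambda_i + \beta_i - 2c_i)} \longrightarrow e^{-\sum_{i=1}^{r}(x_i + y_i - 2z_i)},
\]
and, after multiplication by the lattice cell volume $(1-\alpha)^r$, the discrete sum becomes a Riemann sum for the integral $I(x,y)$. For the ratio of dimensions I would invoke the Weyl dimension formula for $Sp_{2r}$, which expresses $\dim V_\mu$ as a polynomial of degree $r^2$ in the shifted coordinates $\mu_i + r - i + 1$, with leading part a combinatorial constant times $d_r(\mu)$. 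It follows that $\dim V_\beta / \dim V_\lambda \to d_r(y)/d_r(x)$, the constant cancelling in the ratio. Multiplying these two limits by the prefactor $(1-\alpha)^{2r}$ coming from $a(q)$ produces $(1-\alpha)^{-r}P_{2r}(\lambda,\beta) \to (d_r(y)/d_r(x))\, I(x,y) = p_r(x,y)$ for $x,y$ in the interior of $\mathcal{C}_r$.

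The main obstacle lies at the initial times. Because $d_r$ vanishes at the origin, the dimension asymptotic above breaks down at $\mu = 0$; correspondingly, $M(n)$ has rank strictly less than $2r$ for $n<r$, so $\Lambda(n)$ is supported on the boundary of $\mathcal{C}_r$ and is not described by $p_r(0, \cdot)$. To obtain convergence for every $n \ge 1$, I would complement the pointwise kernel convergence on the interior with a tightness argument: Lemma~\ref{nu} controls the total mass of each row by a sum of $2r$ geometric variables, whose $(1-\alpha)$-scaling converges in distribution to a sum of exponentials, so the rescaled marginals are tight. A weak convergence lemma in the spirit of Lemma~2.21 of \cite{BorodinFerrari}, already used in the preceding corollary but now applied with scaling parameter $1-\alpha$, allows this combination of tightness and pointwise kernel convergence to propagate through the Markov chain and yields the claimed finite-dimensional distributional convergence.
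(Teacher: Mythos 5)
Your proposal is correct and follows essentially the same route as the paper, but supplies the details that the paper compresses into a single sentence: the paper's proof merely writes down the Weyl dimension formula for $s_\lambda^{2r}(\mathbf{1})$ and asserts that the corollary ``follows immediately'' from Theorem~\ref{maintheo} and Proposition~\ref{eigenvalues}. What that implicit step actually amounts to is exactly your computation: setting $q_i=1$ gives $a(q)=(1-\alpha)^{2r}$; the Weyl polynomial is homogeneous of degree $r^2$ with leading part proportional to $d_r$, so $s_\beta^{2r}(\mathbf 1)/s_\lambda^{2r}(\mathbf 1)\to d_r(y)/d_r(x)$ under $x=(1-\alpha)\lambda$, $y=(1-\alpha)\beta$; the Riemann-sum approximation $\sum_c \alpha^{\sum(\lambda_i+\beta_i-2c_i)} \sim (1-\alpha)^{-r} I(x,y)$; and the bookkeeping $(1-\alpha)^{2r}\cdot(1-\alpha)^{-r}=(1-\alpha)^r$, which is precisely the Jacobian factor converting the rescaled probability mass function into a density, yielding $(1-\alpha)^{-r}P_{2r}(\lambda,\beta)\to p_r(x,y)$. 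You also identify a genuine subtlety that the paper glosses over, namely that $p_r(0,\cdot)$ is not the relevant object for $n<r$ since $\Lambda(n)$ is then supported on the boundary where $d_r$ vanishes; some argument is needed to propagate convergence past these degenerate initial times, and your tightness sketch is a reasonable way to address it.

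One small caveat: Lemma~2.21 of \cite{BorodinFerrari} is a discrete-to-continuous-\emph{time} limit criterion, which is what the preceding corollary needs for the $\alpha\to 0$ Poissonisation; it is not the right tool for the $\alpha\to 1$ spatial-scaling limit here. The appropriate replacement is a standard weak-convergence-of-Markov-chains argument (pointwise convergence of rescaled kernels to the continuous transition density, plus tightness), which is what you are effectively invoking anyway.
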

\begin{proof}
 The Weyl dimension formula  (see Knapp \cite{Knapp}, Thm V.5.84)  for the symplectic groups gives  $$s_\lambda^{2r}({\bf 1})=\prod_{1\le i<j\le r} \frac{(\lambda_i-\lambda_j+j-i)(\lambda_i+\lambda_j+2n+2-j-i)}{(j-i)(2n+2-j-i)}\prod_{i=1}^r\frac{\lambda_i+n+1-i}{n+1-i}.$$ 
Thus the corollary  follows immediately from Theorem \ref{maintheo} and Proposition \ref{eigenvalues}.
\end{proof}

\section{proofs}\label{proofs}
The proof of Theorem  \ref{theoprel} rests on the same ingredients as the proof of Proposition 8.8 of  \cite{DefosseuxSO}. It will follow from an intertwining property stated in Proposition \ref{intertwining} and an application of a  criterion established in \cite{PitmanRogers} by Pitman and Rogers who give a simple condition sufficient to ensure that a function of a Markov process is again a Markov process.
For $\lambda\in \mathcal{W}_{k}$ we consider the measure $M_\lambda$ on $GT_k(\lambda)$ defined by 
$$M_\lambda=\sum_{x\in GT_k(\lambda)}\frac{w_k(x)}{s^k_\lambda(q)}\delta_x,$$
where $\delta_x$ is the dirac measure at $x$, 
and the measure $m_\lambda$ defined as the image of  the measure $M_\lambda$ by the map $x\in GT_k(\lambda)\mapsto x^{k-1}\in \mathcal{W}_{k-1}$, i.e  
 \[
m_\lambda=\sum_{\beta\in \mathcal{W}_{k-1}: \beta\preceq \lambda} q_r^{\vert\beta\vert -\vert \lambda\vert }\frac{s^{k-1}_\beta(q)}{s^k_ \lambda(q)}\delta_\beta, 
\]
when $k=2r$, and
\[
m_\lambda=\sum_{\beta\in \mathcal{W}_{k-1}: \beta\preceq \lambda} q_{r}^{\vert \lambda\vert -\vert\beta\vert }\frac{s^{k-1}_\beta(\tilde{q})}{s^k_ \lambda(q)}\delta_\beta, 
\]
when $k=2r-1$, with $\tilde{q}=(q_1,\dots,q_{r-1})$. One defines a kernel $\Lambda_k$ form $\mathcal{W}_k\times\mathcal{W}_k$ to $\mathcal{W}_{k-1}\times\mathcal{W}_k\times\mathcal{W}_k$ by letting
\[
\Lambda_k((c,\lambda),(\beta,c',\lambda'))=m_\lambda(\beta)1_{c\preceq \lambda}1_{c=c',\lambda=\lambda'},
\]
 for $c,\lambda,c',\lambda'\in \mathcal{W}_k$, $\beta\in \mathcal{W}_{k-1}$.\\
 
 As $(Z^k(n),n\ge 0)$ is conditionally independent of $(Z^i(t),t\ge 0,i=1,\dots,k-2)$ given $(Z^{k-1}(t),t\ge 0)$, Theorem \ref{theoprel} may be proved by induction on $k$. The theorem is true for $k=1$. Suppose that the process $(Z^{k-1}(n),n\ge 0)$ is Markovian with transition kernel $S_{k-1}$. The dynamic of the model implies that $$(Z^{k-1}(n),Z^{k}(n), n\ge 0)$$ is also Markovian. As for any $\lambda\in \mathcal{W}_{k-1}$, $$S_{k-1}((c_0,\lambda),(.,.))$$ doesn't depend on $c_0$, it implies that the process $$((X^{k-1}(n),Z^k(n)), n\ge 0)$$ is also Markovian.  Let us denote by $Q_k$ its transition kernel. Proposition \ref{intertwining} claims that it satisfies an intertwining property, which implies, using the Rogers and Pitman criterion of \cite{PitmanRogers},  that the process $(Z^{k}(n),n\ge 0)$ is Markovian with transition kernel $S_k$. Thus the theorem is true for the integer $k$.
\begin{prop} \label{intertwining}
\[
\Lambda_k Q_k = S_k \Lambda_k
\]
\end{prop}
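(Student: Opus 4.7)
The plan is to prove the intertwining $\Lambda_k Q_k = S_k \Lambda_k$ by a direct computation of both sides, mirroring the proof of Proposition 8.8 of \cite{DefosseuxSO} and adapting it to the symplectic setting with non-equal weights. The key is to exploit the two-phase structure of the dynamics: I would factor $Q_k = Q_k^- Q_k^+$ according to the left-jump phase $n \to n+\tfrac{1}{2}$ and the right-jump phase $n+\tfrac{1}{2}\to n+1$. Because the pushing and blocking rules couple row $k$ only with row $k-1$ (pushes use the already-updated row-$(k-1)$ position, blocks use the previous configuration), the kernel $Q_k$ is entirely determined by the geometric families $\xi^{k-1}$ and $\xi^{k}$ together with the interlacing indicators; the lower rows are irrelevant by the Markov structure noted just above the proposition.

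Unpacking both sides explicitly, the right-hand side collapses by construction of $\Lambda_k$ to
\[
S_k\Lambda_k((c,\lambda),(\beta',c',\lambda')) \;=\; S_k((c,\lambda),(c',\lambda'))\, m_{\lambda'}(\beta'),
\]
so by (\ref{S2r}) or (\ref{S2r-1}) it is a product of the Schur ratio $s^k_{\lambda'}(q)/s^k_\lambda(q)$, the power $\alpha^{\sum_i(\lambda_i + \lambda'_i - 2c'_i)}$, the normalizing factor $a(q)$ or $\tilde{a}(q)$ (with the wall correction in the odd case), and the measure $m_{\lambda'}(\beta')$. The left-hand side is $\sum_{\beta \preceq \lambda} m_\lambda(\beta)\, Q_k((\beta,c,\lambda),(\beta',c',\lambda'))$, and I would expand $Q_k$ explicitly as a product of geometric probabilities in $\alpha q_r$ and $\alpha q_r^{-1}$ subject to interlacing constraints. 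The combinatorial pivot is the symplectic branching rule
\[
s^{2r}_\lambda(q) = \sum_{\beta \preceq \lambda} q_r^{|\beta| - |\lambda|}\, s^{2r-1}_\beta(q), \qquad s^{2r-1}_\lambda(q) = \sum_{\beta \preceq \lambda} q_r^{|\lambda| - |\beta|}\, s^{2r-2}_\beta(\tilde{q}),
\]
which is precisely the normalization that makes $m_\lambda$ a probability measure and which allows the $q_r$-weights carried by $m_\lambda(\beta)$ and $m_{\lambda'}(\beta')$ to recombine with the $\xi^k$-geometric weights and thereby reproduce the Schur ratio and the $\alpha$-power appearing in $S_k$.

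The main obstacle is the detailed bookkeeping of these factors through the two-phase dynamics, especially in the odd case $k = 2r-1$: the wall at the origin causes the bottom particle $X^k_r$ either to rebound or to stay pinned depending on whether $c'_r > 0$ or $c'_r = 0$, and this is precisely the source of the asymmetric factor $(1-\alpha q_r^{-1})1_{c'_r>0} + 1_{c'_r=0}$ in (\ref{S2r-1}). The verification reduces, after the Schur-function manipulations above, to checking that summing the geometric weights of the left-jump phase over the intermediate positions near the wall reproduces this factor exactly; once this is done, the remaining algebra is a routine matching of exponents and interlacing indicators. Combined with the inductive hypothesis on $k-1$ and the Rogers-Pitman criterion of \cite{PitmanRogers}, the intertwining then completes the induction used for Theorem \ref{theoprel}.
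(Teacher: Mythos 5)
Your overall plan matches the paper's: write both sides of $\Lambda_k Q_k = S_k \Lambda_k$ explicitly, split $Q_k$ by the left-jump and right-jump phases, invoke the symplectic branching rule to normalize $m_\lambda$, and track the wall correction in the odd case. The structure of $S_k\Lambda_k = S_k((c,\lambda),(c',\lambda'))\,m_{\lambda'}(\beta')$ is also correctly identified.

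However, there is a concrete gap at the step you dismiss as ``routine matching of exponents and interlacing indicators.'' The kernel $Q_k$ is \emph{not} a single product of geometric weights --- it is itself a sum over the intermediate row-$(k-1)$ configuration $v = X^{k-1}(n+\tfrac12)$, because the pushes acting on row $k$ at the right-jump phase depend on the already-updated row-$(k-1)$ positions. So the left-hand side $\Lambda_k Q_k$ is a \emph{double} sum, over $u = X^{k-1}(n)$ (coming from $\Lambda_k$) and over $v$ (coming from $Q_k$), with the factor $S_{k-1}(u,(v,x))$ coupling them and the one-step jump kernels $\overset{u_i\leftarrow}{P_k}(y_i \wedge v_{i-1}, \cdot)$ and $\overset{\rightarrow v_i}{P_k}(z'_{i+1}\vee x_{i+1}, \cdot)$ threaded through with min/max truncations. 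The paper performs these two sums in a specific order (over $u$ first, then over $v$), and each collapse rests on a nontrivial summation identity --- identities (4) and (5) of Lemma 8.3 of \cite{DefosseuxSO} --- which roughly express how the geometric blocking/pushing weights telescope against the $q_r$-weights in $m_\lambda$ to reproduce the Schur ratio and the $\alpha$-power in $S_k$. The branching rule alone does not do this: it only accounts for the $u$-sum being well-normalized, not for the recombination with the $v$-sum. Without identifying these two identities (or equivalents), the proof attempt stops exactly where the mathematical content begins.

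Two smaller points: first, the claim that $Q_k$ is ``entirely determined by $\xi^{k-1}$ and $\xi^k$'' is imprecise --- the dependence on the lower rows enters through $S_{k-1}$, which appears as a factor inside $Q_k$, and the usable fact is that $S_{k-1}((c_0,\cdot),\cdot)$ does not depend on $c_0$. Second, the odd-case factor $(1-\alpha q_r^{-1})1_{c_r>0}+1_{c_r=0}$ does not come from a rebound; it comes from the asymmetry of the geometric law near the boundary: when $c_r>0$ the probability of reaching exactly $c_r$ carries a factor $(1-\alpha q_r^{-1})$, whereas $c_r=0$ aggregates all jump sizes large enough to hit the wall, which removes that factor.
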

\begin{proof} 

For $x,y,z\in \mathcal{W}_{k}$, we let 
$$S_{k}(x,(y,z))=S_{k}((c,x),(y,z)),$$
where $c$ is any vector of $\mathcal{W}_{k}$ such that $c\preceq x$. 
Let $\xi^+$ and $\xi^-$ be two geometric random variables with respective parameters $\alpha q_k$ and $\alpha q_k^{-1}$. For $a,b\in \R_+$ we denote by $$\overset{ b\leftarrow }{P_{k}}(a,.)$$  the law of $\max(a-\xi^+,b)$ when $k$ is even and the law of $\max(a-\xi^-,b)$ when $k$ is odd. We denote by $$\overset{ \rightarrow b }{P_{k}}(a,.)$$  the law of $\min(a+\xi^-,b)$ when $k$ is even and the law of $\min(a+\xi^+,b)$ when $k$ is odd.    We have  for $(u,z,y),(x,z',y')\in  \mathcal{W}_{k-1}\times \mathcal{W}_{k}\times  \mathcal{W}_{k}$ such that $u,z\preceq y$ and $x,z'\preceq y'$  
\begin{align}\label{Qlodd} 
Q_{k}((u,z,y),(x,z',y'))=\sum_{v\in \mathcal{W}_{k-1}}S_{k-1}(u,(v,x)) & \prod_{i=1}^{[ \frac{k+1}{2}]}\overset{u_{i}\leftarrow}{P_{k}}(y_{i}\wedge v_{i-1},z'_{i}) \nonumber \\
&\times\prod_{i=0}^{[\frac{k+1}{2}]-1}\overset{ \rightarrow v_{i}}{P_{k}}(z'_{i+1}\vee x_{i+1},y'_{i+1}), 
\end{align}
with  the conventions that $x_{[\frac{k+1}{2}]}=u_{[\frac{k+1}{2}]}=0$ when $k$ is odd, $v_0=+\infty$  in the odd and the even cases and the sum runing over $v\in  \mathcal{W}_{k-1}$ such that $v_i\in\{y'_{i+1},\dots,x_i\wedge z'_i\}$, for $i\in\{1,\dots,[\frac{k+1}{2}]-1\}$. Then we write
 
 \begin{align*}
L_{k}Q_{k}((z,y),(x,z',y'))=\sum_{u\in \mathcal{W}_{k},v\in \mathcal{W}_{k-1}}L_{k}((z,y)&,(u,z,y))S_{k-1}(u,(v,x)) \nonumber \\ & \times\prod_{i=1}^{[\frac{k+1}{2}]}\overset{u_{i}\leftarrow}{P_{k}}(y_{i}\wedge v_{i-1},z'_{i}) \nonumber \\ 
&\quad \times\prod_{i=0}^{[\frac{k+1}{2}]-1}\overset{ \rightarrow v_{i}}{P_{k}}(z'_{i+1}\vee x_{i+1},y'_{i+1}).\end{align*}

We get  the intertwining summing over $u$ first and over $v$ after, using respectively identities (4) and (5) of Lemma 8.3 of  \cite{DefosseuxSO}.

\end{proof}


\begin{thebibliography}{00}    
    \bibitem{BorodinFerrari}{A. Borodin and P. Ferrari, {Anisotropic growth of random 
surfaces in 2+1 dimensions},	arXiv:0804.3035v2 (2008).}
\bibitem{BorodinKuan}{A. Borodin and J. Kuan, {Random surface growth with a wall and Plancherel measures for $O(\infty)$}, Communications on pure and applied mathematics, 67 (2010), 831--971.}
 \bibitem{Defosseux}{M. Defosseux, {Orbit measures and interlaced determinantal point processes}, Ann. Inst. H. Poincar\'e Probab. Statist.,  46 (2010) 209--249.} 
  \bibitem{DefosseuxLUE}{M. Defosseux,  {Generalized Laguerre Unitary Ensembles and an interacting particles model with a wall}, to appear in Electronic Communications in Probability.}
  \bibitem{DefosseuxSO}{M. Defosseux, {An interacting particles model and a Pieri-type formula for the orthogonal group}, arXiv:1012.0117 (2010).}
  \bibitem{Knapp}{{ A.W. Knapp},  Lie groups, beyond an introduction. Second ed. 
Progress in mathematics, Vol. 140. Birkh\"auser Boston Inc., Boston, MA. MR1920389 (2003c:22001) (2002).}
\bibitem{PitmanRogers}{J.W. Pitman and L.C.G. Rogers, {Markov functions}, Ann. Probab.,  9(4) (1981) 573--582.} 
\bibitem{WarrenWindridge}{J. Warren  and P.  Windridge, {Some Examples of Dynamics for Gelfand Tsetlin patterns,} Electronic Journal of Probability 14 (2009), 1745--1769.}
  \end{thebibliography}
\end{document}